\newtheorem{theorem}{Theorem}
\newtheorem{lemma}{Lemma}
\theoremstyle{definition}
\newtheorem{definition}{Definition}
\newtheorem{corollary}{Corollary}
\begin{document}
%
\title{Non-convex Fraction Function Penalty: Sparse Signals Recovered from Quasi-linear Systems}
%
%
%

\author{Angang~Cui,
        Jigen~Peng,
        and~Haiyang~Li
\thanks{The work was supported by the National Natural Science Foundations of China (11771347,11131006,41390450,11761003,11271297) and the Science Foundations of Shaanxi Province of China (2016JQ1029,2015JM1012).}       
\thanks{A. Cui and J. Peng are with the School of Mathematics and Statistics, Xi'an Jiaotong University, Xi'an 710049, China
e-mail: (cuiangang@163.com; jgpengxjtu@126.com).}
\thanks{H. Li is with the School of Science, Xi'an Polytechnic University, Xi'an, 710048, China e-mail: (fplihaiyang@126.com).}
\thanks{Manuscript received , ; revised , .}}

%
%

\markboth{Journal of \LaTeX\ Class Files,~Vol.~, No.~, ~}%
{Shell \MakeLowercase{\textit{et al.}}: Bare Demo of IEEEtran.cls for IEEE Journals}
%



\maketitle

\begin{abstract}
The goal of compressed sensing is to reconstruct a sparse signal under a few linear measurements far less than the dimension of the ambient space of the signal. However, many real-life applications in physics
and biomedical sciences carry some strongly nonlinear structures, and the linear model is no longer suitable. Compared with the compressed sensing under the linear circumstance, this nonlinear compressed sensing is much
more difficult, in fact also NP-hard, combinatorial problem, because of the discrete and discontinuous nature of the $\ell_{0}$-norm and the nonlinearity. In order to get a convenience for sparse signal recovery, we set
most of the nonlinear models have a smooth quasi-linear nature in this paper, and study a non-convex fraction function $\rho_{a}$ in this quasi-linear compressed sensing. We propose an iterative fraction thresholding algorithm to solve the regularization problem $(QP_{a}^{\lambda})$ for all $a>0$. With the change of parameter $a>0$, our algorithm could get a promising result, which is one of the advantages for our algorithm compared with other algorithms. Numerical experiments show that our method performs much better compared with some state-of-art methods.
\end{abstract}

\begin{IEEEkeywords}
Compressed sensing, Sparse signal, Quasi-linear, Non-convex fraction function, Iterative thresholding algorithm.
\end{IEEEkeywords}

%
\IEEEpeerreviewmaketitle

\section{Introduction}\label{section1}
In compressed sensing (see, e.g., \cite{candes1,dono2}), the problem of reconstructing a sparse signal under a few linear measurements which are far fewer than the dimension of the ambient space of the signal can be
modeled into the following $\ell_{0}$-minimization:
\begin{equation}\label{equ1}
(P_{0})\ \ \ \ \ \min_{x\in \mathcal{R}^{n}}\|x\|_{0}\ \ \mathrm{subject}\ \mathrm{to}\ \ Ax=b
\end{equation}
where $A\in \Re^{m\times n}$ is a $m\times n$ real matrix of full row rank with $m<n$, and $b\in \Re^{m}$ is a nonzero real vector of $m$-dimension, and $\|x\|_{0}$ is the $\ell_{0}$-norm of the real vector $x$, which
counts the number of the non-zero entries in $x$ (see, e.g., \cite{bru3,elad4,the5}). In general, the problem $(P_{0})$ is computational and NP-hard because of the discrete and discontinuous nature of the $\ell_{0}$-norm.

However, many real-life applications in physics and biomedical sciences carry some strongly nonlinear structures \cite{ehler6}, so that the linear model in problem $(P_{0})$ is no longer suitable. We consider a map $A: \Re^{n}\rightarrow \Re^{m}$, which is no longer necessarily linear, and reconstruct a sparse vector $x\in \Re^{n}$ from the measurements $b\in \Re^{m}$ given by
\begin{equation}\label{equ2}
A(x)=b.
\end{equation}
In order to get a convenience for sparse signal recovery, we set most of the nonlinear models have a smooth quasi-linear nature. By this means, there exists a Lipschitz map
\begin{equation}\label{equ3}
F: \Re^{n}\rightarrow \Re^{m\times n}
\end{equation}
such that
\begin{equation}\label{equ4}
A(x)=F(x)x
\end{equation}
for all $x\in \Re^{n}$.

The sparse signals recovered under the quasi-linear case can be mathematically viewed as the following form
\begin{equation}\label{equ5}
(QP_{0})\ \ \ \ \ \min_{x\in \Re^{n}}\|x\|_{0}\ \ \mathrm{subject}\ \mathrm{to}\ \ F(x)x=b.
\end{equation}
Similarly, the quasi-linear compressed sensing is also combinatorial and NP-hard (see, e.g., \cite{ehler6, sigl7}).

The $\ell_{1}$-norm is the most famous convex relaxation (see, e.g., \cite{ehler6, sigl7}), and the minimization for quasi-linear compressed sensing has the following form
\begin{equation}\label{equ6}
(QP_{1})\ \ \ \ \ \min_{x\in \Re^{n}}\|x\|_{1}\ \ \mathrm{subject}\ \mathrm{to}\ \ F(x)x=b.
\end{equation}
for the constrained problem and
\begin{equation}\label{equ7}
(QP_{1}^{\lambda})\ \ \ \ \ \ \min_{x\in \Re^{n}} \Big\{\|F(x)x-b\|_{2}^{2}+\lambda\|x\|_{1}\Big\}
\end{equation}
for the regularization problem, where $\|x\|_{1}=\sum_{i=1}^{n}|x_{i}|$ is the $\ell_{1}$-norm of vector $x$.

In problem $(QP_{1})$, many excellent theoretical works (see, e.g., \cite{ehler6, sigl7}) have shown that the $\ell_{1}$-norm minimization can really make an exact recovery in some specific conditions.
In general, however, it may be suboptimal for recovering a sparse signal, and the regularization problem $(QP_{1}^{\lambda})$ leads to a biased estimation by shrinking all the components
of the vector toward zero simultaneously, and sometimes results in over-penalization in the regularization model $(QP_{1}^{\lambda})$ as the $\ell_{1}$-norm in linear compressed sensing.

Inspired the good performance of the fraction function in image restoration and linear compressed sensing (see, e.g., \cite{geman14, li30}), in this paper, we replace $\|x\|_{0}$ with a continuous sparsity
promoting penalty function
\begin{equation}\label{equ8}
P(x)=P_{a}(x)=\sum_{i=1}^{n}\rho_{a}(x_{i}),\ \ \ a>0
\end{equation}
where
\begin{equation}\label{equ9}
\rho_{a}(t)=\frac{a|t|}{a|t|+1}
\end{equation}
is the fraction function, and it is called "strictly non-interpolating" in [14]. Clearly, $\rho_{a}(t)$ is increasing and concave in $t\in[0,+\infty]$.

With the change of parameter $a>0$, the non-convex function $P_{a}(x)$ interpolates the $\ell_{0}$-norm
\begin{equation}\label{equ10}
\lim_{a\rightarrow+\infty}\rho_{a}(x_{i})=\left\{
    \begin{array}{ll}
      0, & {\ \ \mathrm{if} \ x_{i}=0;} \\
      1, & {\ \ \mathrm{if} \ x_{i}\neq 0.}
    \end{array}
  \right.
\end{equation}

\begin{figure}[h!]
 \centering
 \includegraphics[width=2.7in]{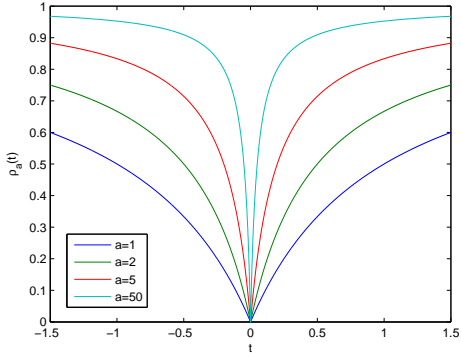}
\caption{The behavior of the fraction function $\rho_{a}(t)$ for various values of $a>0$.}
\label{fig1}       
\end{figure}

By this transformation, the minimization problem $(QP_{0})$ could be translated into the following minimization problem
\begin{equation}\label{equ11}
(QP_{a})\ \ \ \ \ \min_{x\in \mathcal{R}^{n}} P_{a}(x)\ \ \mathrm{subject}\ \mathrm{to}\ \ F(x)x=b
\end{equation}
for the constrained problem and
\begin{equation}\label{equ12}
(QP_{a}^{\lambda})\ \ \ \ \ \min_{x\in \mathcal{R}^{n}}\Big\{\|F(x)x-b\|_{2}^{2}+\lambda P_{a}(x)\Big\}.
\end{equation}
for the regularization problem.

The rest of this paper is organized as follows. In Section \ref{section2}, we propose an iterative fraction thresholding algorithm to solve the regularization problem $(QP_{a}^{\lambda})$ for all $a>0$. In
Section \ref{section3}, we present some numerical experiments to demonstrate the effectiveness of our algorithm. The concluding remarks are presented in Section \ref{section4}.

\section{Iterative fraction thresholding algorithm (IFTA)}\label{section2}
In this section, the iterative fraction thresholding algorithm (IFTA) is proposed to solve the regularization problem $(QP_{a}^{\lambda})$ for all $a>0$. Before we embark to this discussion, some
results need to be expressed before IFTA is proposed to solve the regularization problem $(QP_{a}^{\lambda})$.\\

We define a function of $\beta\in \mathcal{R}$ as
\begin{equation}\label{equ13}
f_{\lambda}(\beta)=(\beta-\gamma)^{2}+\lambda\cdot\rho_{a}(\beta)
\end{equation}
and
\begin{equation}\label{equ14}
\mathrm{prox}_{a,\lambda}^{\beta}(\gamma):=\arg\min_{\beta\in \mathcal{R}}f_{\lambda}(\beta).
\end{equation}

\begin{lemma}\label{lem1}
The operator $\mathrm{prox}_{a,\lambda}^{\beta}$ defined in (\ref{equ14}) can be expressed as
\begin{equation}\label{equ15}
\mathrm{prox}_{a,\lambda}^{\beta}(\gamma)=\left\{
    \begin{array}{ll}
      g_{a,\lambda}(\gamma), & \ \ \mathrm{if} \ {|\gamma|> t^{\ast};} \\
      0, & \ \ \mathrm{if} \ {|\gamma|\leq t^{\ast}.}
    \end{array}
  \right.
\end{equation}
where $g_{a,\lambda}(\gamma)$ is defined as
\begin{equation}\label{equ16}
g_{a,\lambda}(\gamma)=sign(\gamma)(\frac{\frac{1+a|\gamma|}{3}(1+2\cos(\frac{\phi(\gamma)}{3}-\frac{\pi}{3}))-1}{a}),
\end{equation}
$$\phi(\gamma)=\arccos(\frac{27\lambda a^{2}}{4(1+a|\gamma|)^{3}}-1)$$
and the threshold value satisfies
\begin{equation}\label{equ17}
t_{a,\lambda}^{\ast}=\left\{
    \begin{array}{ll}
      t_{a,\lambda}^{1}, & \ \ \mathrm{if} \ {\lambda\leq \frac{1}{a^{2}};} \\
      t_{a,\lambda}^{2}, & \ \ \mathrm{if} \ {\lambda>\frac{1}{a^{2}}.}
    \end{array}
  \right.
\end{equation}
where
\begin{equation}\label{equ18}
t_{a,\lambda}^{1}=\frac{\lambda}{2}a, \ \ \ \ t_{a,\lambda}^{2}=\sqrt{\lambda}-\frac{1}{2a}.
\end{equation}
\end{lemma}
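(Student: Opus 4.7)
The plan is to exploit symmetry and reduce the problem to a univariate cubic equation in the smooth region, then compare the resulting interior critical point against the boundary value at $\beta=0$.

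First I would observe that $\rho_{a}(\beta)$ depends only on $|\beta|$, so for any candidate $\beta$ with $\beta\gamma<0$ the objective $f_{\lambda}(\beta)$ strictly exceeds $f_{\lambda}(|\beta|\,\mathrm{sign}(\gamma))$. Hence one may assume without loss of generality that $\gamma\geq 0$ and restrict the search to $\beta\geq 0$; the general case is recovered through the factor $\mathrm{sign}(\gamma)$ in (\ref{equ16}). For $\beta>0$ the function $\rho_{a}$ is smooth, so I would set the derivative to zero:
\[
2(\beta-\gamma)+\frac{\lambda a}{(a\beta+1)^{2}}=0,
\]
which, after clearing denominators, yields the cubic
\[
2a^{2}\beta^{3}+2a(2-a\gamma)\beta^{2}+2(1-2a\gamma)\beta+(\lambda a-2\gamma)=0.
\]

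The next step is to solve this cubic explicitly. I would apply the trigonometric form of Cardano's formula: first depress the cubic via a linear substitution, then use the identity $4\cos^{3}\theta-3\cos\theta=\cos(3\theta)$ to read off the three real roots. The root corresponding to the angle $\theta=\frac{\phi(\gamma)}{3}-\frac{\pi}{3}$, after back-substitution, is precisely $g_{a,\lambda}(\gamma)$ in (\ref{equ16}); the $\arccos$ defining $\phi(\gamma)$ is real-valued exactly when $27\lambda a^{2}\leq 8(1+a|\gamma|)^{3}$, which I would record as the regime in which a positive critical point can exist.

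Having identified the interior critical point, I would then compare $f_{\lambda}(g_{a,\lambda}(\gamma))$ with $f_{\lambda}(0)=\gamma^{2}$ to determine the global minimizer. The threshold $t_{a,\lambda}^{*}$ is characterized as the value of $|\gamma|$ at which these two values coincide. Using the stationarity relation $2(\beta^{*}-\gamma)=-\lambda a/(a\beta^{*}+1)^{2}$ to eliminate higher powers, the equality $f_{\lambda}(\beta^{*})=\gamma^{2}$ should reduce to a polynomial relation in $|\gamma|$ whose relevant roots give exactly $t_{a,\lambda}^{1}=\lambda a/2$ and $t_{a,\lambda}^{2}=\sqrt{\lambda}-1/(2a)$. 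The dichotomy between $\lambda\leq 1/a^{2}$ and $\lambda>1/a^{2}$ emerges naturally from comparing these two candidate thresholds against the existence condition for a positive stationary point: for small $\lambda$ the interior minimizer exists and beats zero as soon as $|\gamma|>t_{a,\lambda}^{1}$, whereas for large $\lambda$ the interior stationary point is dominated by $\beta=0$ until $|\gamma|$ reaches $t_{a,\lambda}^{2}$.

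The main obstacle I anticipate is the final bookkeeping: carefully picking the correct trigonometric branch (among the three real roots of the cubic, one is typically a local maximum on $(0,\infty)$ and another is spurious), verifying monotonicity of $g_{a,\lambda}$ in $|\gamma|$, and reconciling the case split between $\lambda a^{2}\leq 1$ and $\lambda a^{2}>1$ with the existence condition $27\lambda a^{2}\leq 8(1+a|\gamma|)^{3}$, so that the piecewise formulas (\ref{equ15}) and (\ref{equ17}) fit together with continuity at the transition.
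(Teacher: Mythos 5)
Your outline is correct and follows exactly the route the paper itself points to: the paper does not prove Lemma~\ref{lem1} in-text but defers to \cite{li30} and the Cardano/Cartan root-finding formula, i.e., precisely your reduction to the stationarity cubic $2(\beta-\gamma)(a\beta+1)^{2}+\lambda a=0$, its trigonometric solution, and the comparison of the interior critical value with $f_{\lambda}(0)=\gamma^{2}$; your cubic, the root $g_{a,\lambda}$, and both threshold values (which agree at $\lambda=1/a^{2}$) all check out. One small clarification on the case split: for $\lambda\leq 1/a^{2}$ one has $f_{\lambda}''(\beta)=2-2\lambda a^{2}/(1+a\beta)^{3}\geq 0$ on $[0,\infty)$, so the threshold $t^{1}_{a,\lambda}=\lambda a/2$ comes from the first-order condition $f_{\lambda}'(0^{+})\geq 0$ (a continuous transition) rather than from a tie of objective values, which is the mechanism only in the nonconvex regime $\lambda>1/a^{2}$ yielding $t^{2}_{a,\lambda}=\sqrt{\lambda}-1/(2a)$.
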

The proof of Lemma \ref{lem1} used the Cartan¡¯s root-finding formula expressed in terms of hyperbolic functions and it is  a special case of the reference \cite{xing27}, and the
detailed proof can be seen in \cite{li30}.

\begin{figure}[h!]
  \begin{minipage}[t]{0.49\linewidth}
  \centering
  \includegraphics[width=1.1\textwidth]{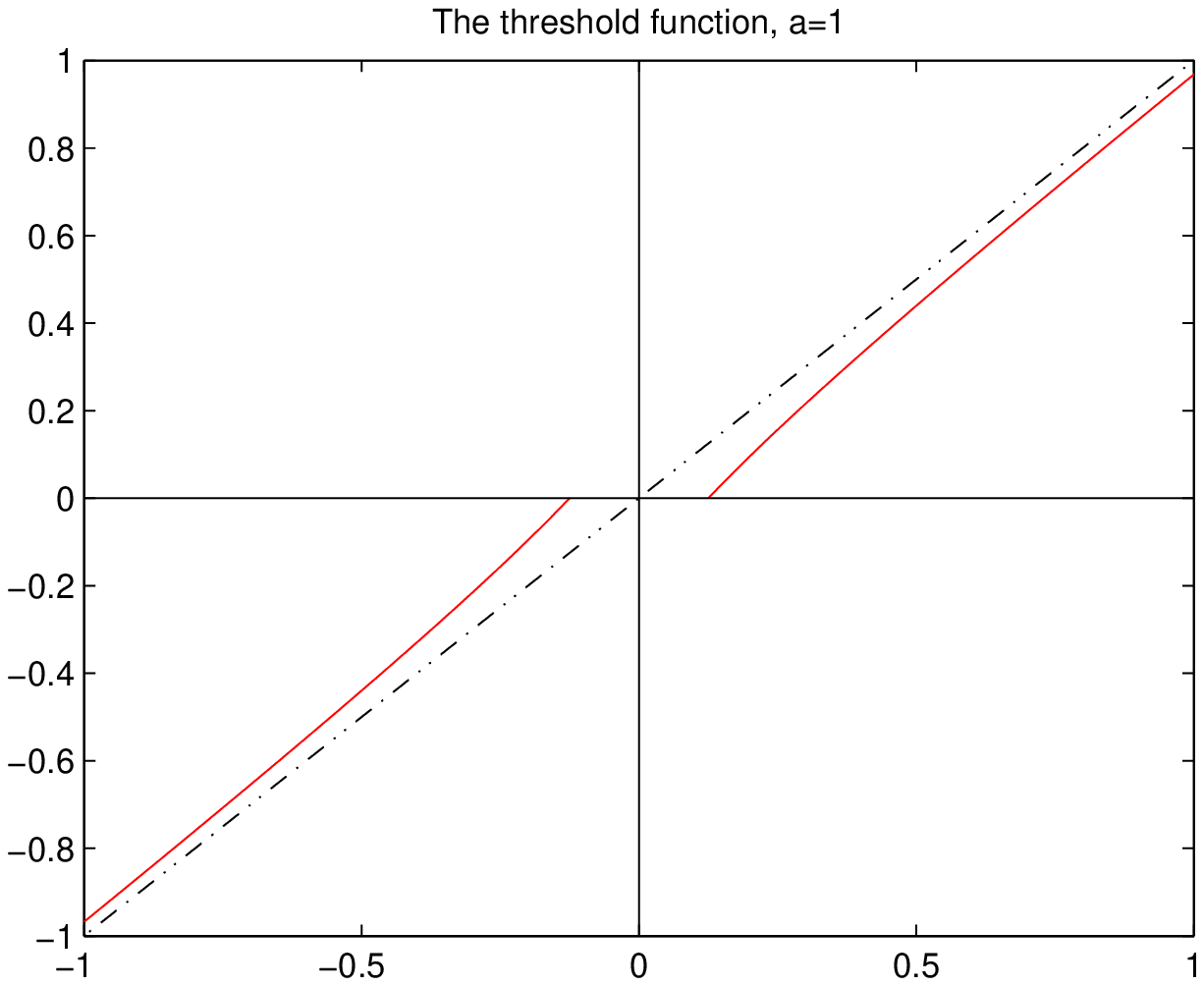}
  \end{minipage}
  \begin{minipage}[t]{0.49\linewidth}
  \centering
  \includegraphics[width=1.1\textwidth]{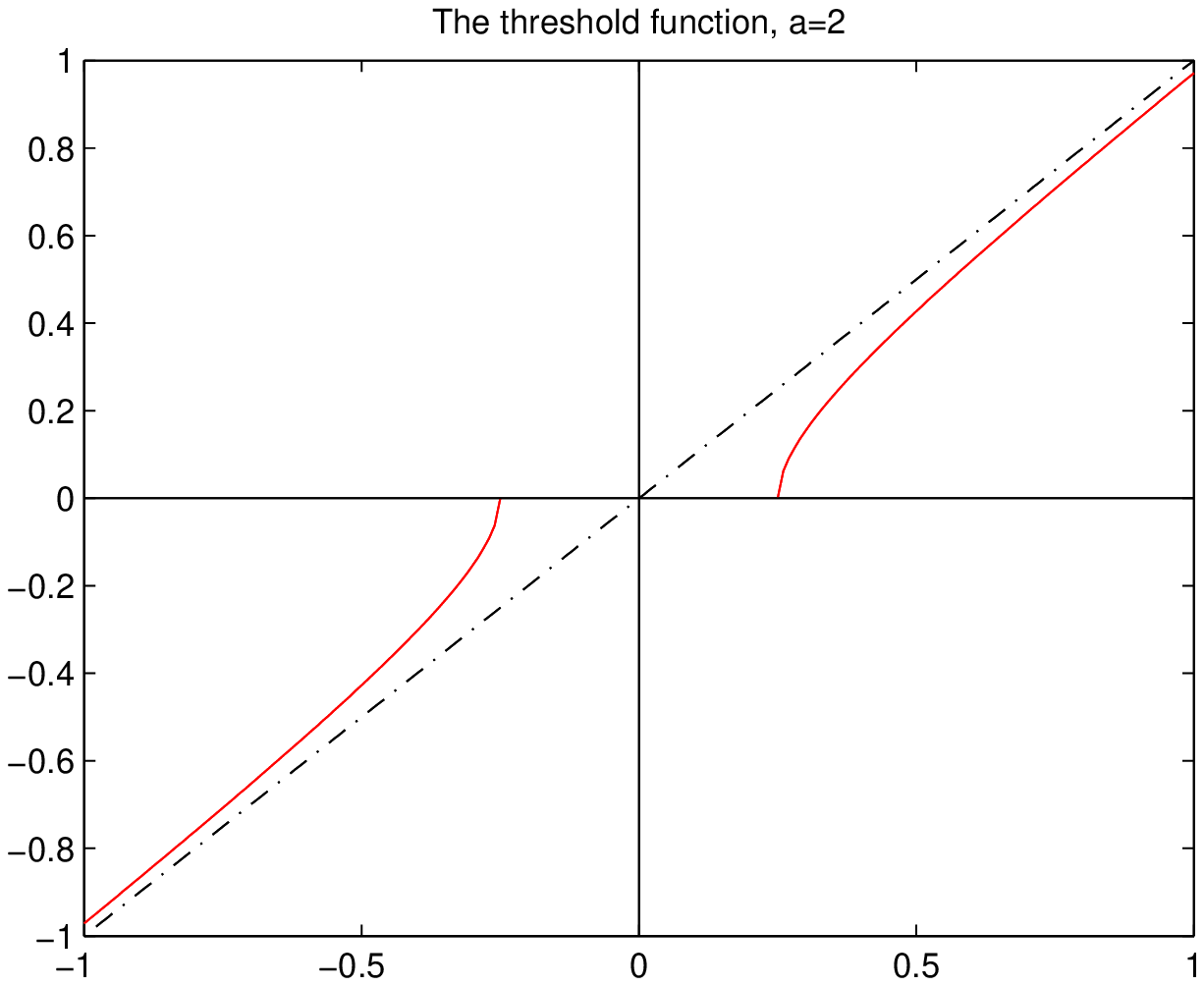}
  \end{minipage}
   \begin{minipage}[t]{0.49\linewidth}
  \centering
  \includegraphics[width=1.1\textwidth]{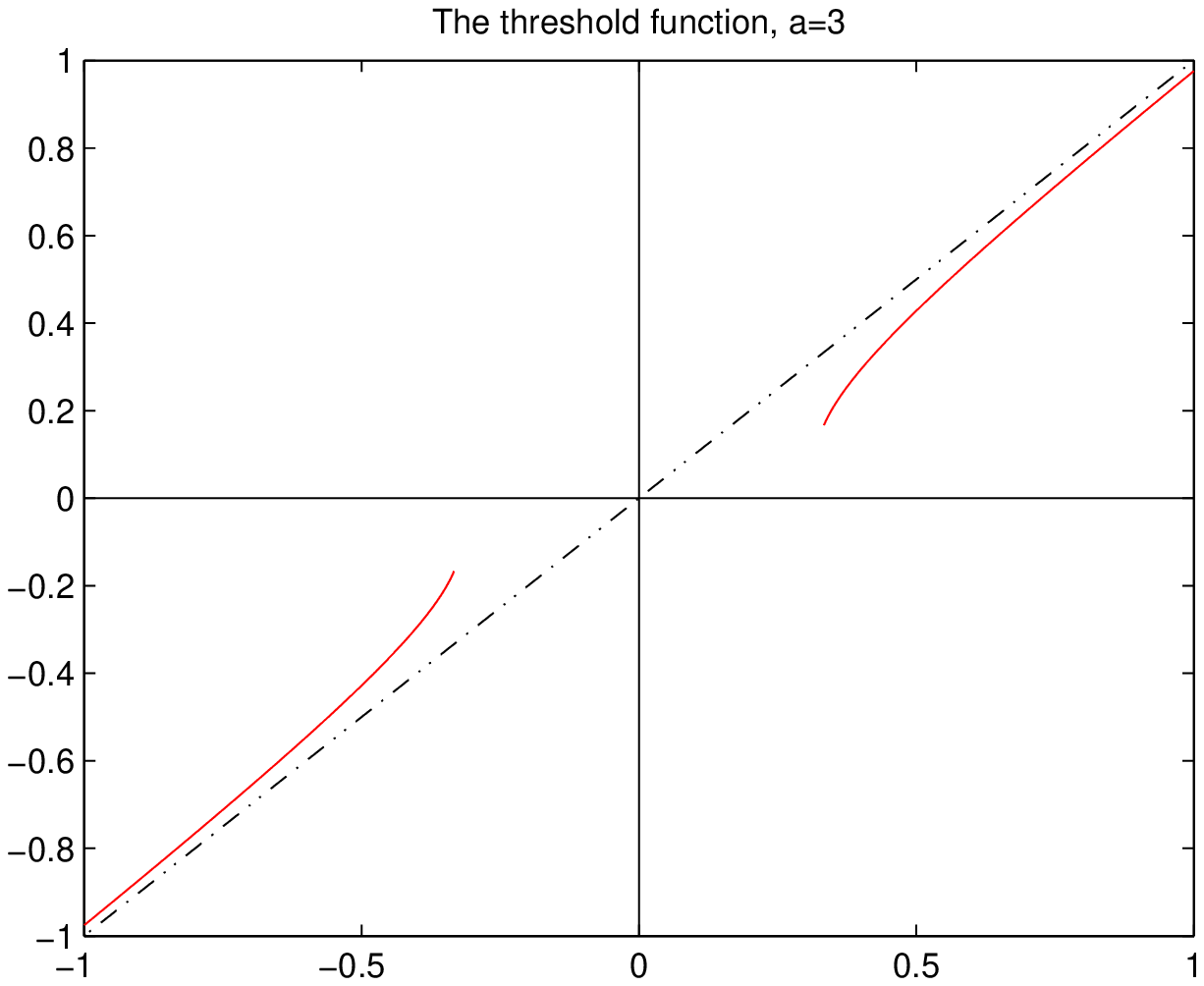}
  \end{minipage}
   \begin{minipage}[t]{0.49\linewidth}
  \centering
  \includegraphics[width=1.1\textwidth]{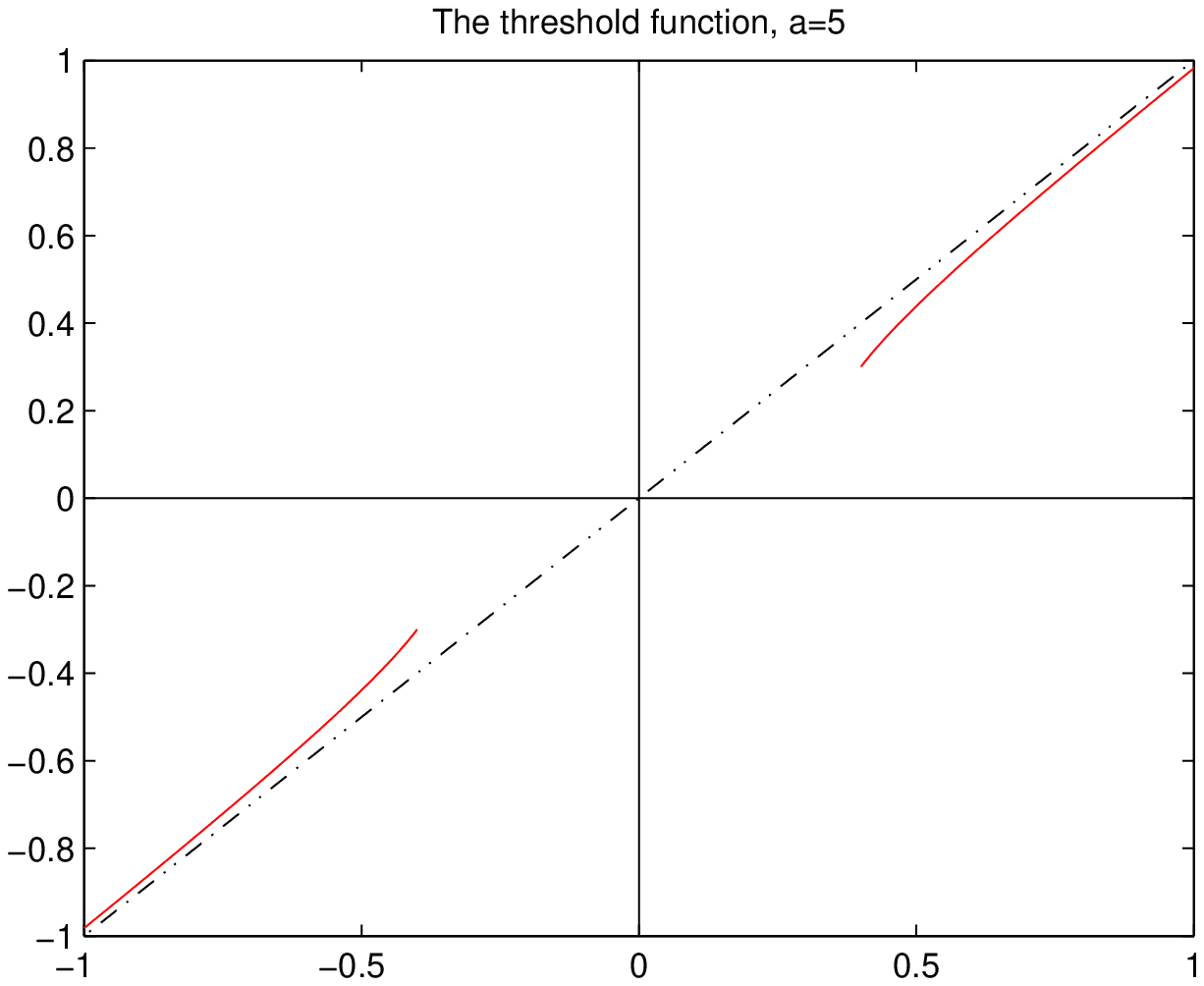}
  \end{minipage}
  \caption{The plots of the threshold function $g_{a,\lambda}$ for a=1, 2, 3, 5, and $\lambda=0.25$.} \label{fig2}
\end{figure}

\begin{definition}\label{de1}
The iterative thresholding operator $G_{\lambda, P}$ can be defined by
\begin{equation}\label{equ19}
G_{\lambda, P}(x)=(\mathrm{prox}_{a,\lambda}^{\beta}(x_{1}), \cdots, \mathrm{prox}_{a,\lambda}^{\beta}(x_{n}))^{T}
\end{equation}
where $\mathrm{prox}_{a,\lambda}^{\beta}$ is defined in Lemma \ref{lem1}.
\end{definition}

Nextly, we will show that the optimal solution to $(QP_{a}^{\lambda})$ could be expressed as a operation.

For any fixed positive parameters $\lambda>0$, $\mu>0$, $a>0$ and $x\in \Re^{n}$, let
\begin{equation}\label{equ20}
C_{1}(x)=\|F(x)x-b\|_{2}^{2}+\lambda P_{a}(x)
\end{equation}
and
\begin{equation}\label{equ21}
\begin{array}{llll}
C_{2}(x, y)&=&\mu\|F(y)x-b\|_{2}^{2}+\lambda\mu P_{a}(x)\\
&&-\mu\|F(y)x-F(y)y\|_{2}^{2}+\|x-y\|_{2}^{2}.
\end{array}
\end{equation}
Clearly, $C_{2}(x,x)=\mu C_{1}(x)$.
\begin{theorem}\label{the1}
For any fixed positive parameters $\lambda>0$, $\mu>0$ and $y\in \Re^{n}$, $\displaystyle\min_{x\in \Re^{n}}C_{2}(x,y)$
is equivalent to
\begin{equation}\label{equ22}
\min_{x\in \Re^{n}}\Big\{\|x-B_{\mu}(y)\|_{2}^{2}+\lambda\mu P_{a}(x)\Big\}
\end{equation}
where $B_{\mu}(y)=y+\mu F(y)^{\ast}(b-F(y)y)$.
\end{theorem}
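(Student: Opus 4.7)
The plan is to show that the three quadratic-in-$x$ terms in $C_{2}(x,y)$, namely
$\mu\|F(y)x-b\|_{2}^{2}$, $-\mu\|F(y)x-F(y)y\|_{2}^{2}$, and $\|x-y\|_{2}^{2}$, collapse (up to additive constants in $x$) into a single square $\|x-B_{\mu}(y)\|_{2}^{2}$. Once this is established, the penalty term $\lambda\mu P_{a}(x)$ carries through untouched and the equivalence claimed in (\ref{equ22}) follows, since dropping constants does not change the argmin.

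First, I would freeze $y$ and write $F:=F(y)$ for brevity, then expand each of the three terms as a quadratic form in $x$. The key algebraic observation is the cancellation of the pure quadratic pieces in $F$: the expansion of $\mu\|Fx-b\|_{2}^{2}$ contributes $\mu x^{\top}F^{\top}Fx$, while the expansion of $-\mu\|Fx-Fy\|_{2}^{2}$ contributes $-\mu x^{\top}F^{\top}Fx$, and these two cancel exactly. This cancellation is precisely the design feature of the surrogate $C_{2}$, and it is why the matrix $F(y)^{\top}F(y)$ disappears from the quadratic part of the problem.

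After the cancellation, the only surviving quadratic-in-$x$ term is $\|x\|_{2}^{2}$ coming from $\|x-y\|_{2}^{2}$, and the linear-in-$x$ contributions combine into $-2x^{\top}\bigl(y+\mu F(y)^{\top}b-\mu F(y)^{\top}F(y)y\bigr) = -2x^{\top}B_{\mu}(y)$. Completing the square then gives
\begin{equation*}
C_{2}(x,y)=\|x-B_{\mu}(y)\|_{2}^{2}+\lambda\mu P_{a}(x)+R(y),
\end{equation*}
where $R(y)$ gathers every term that does not involve $x$ (namely $\mu\|b\|_{2}^{2}-\mu\|Fy\|_{2}^{2}+\|y\|_{2}^{2}-\|B_{\mu}(y)\|_{2}^{2}$, up to obvious sign bookkeeping). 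Since $R(y)$ is constant with respect to the minimization variable $x$, the problem $\min_{x}C_{2}(x,y)$ and the problem (\ref{equ22}) share the same argmin, which is what we wanted.

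The argument is essentially bookkeeping rather than analysis, so the only place to be careful is the sign in front of the term $-\mu\|F(y)x-F(y)y\|_{2}^{2}$: this minus sign is what produces the cancellation of $\mu x^{\top}F^{\top}Fx$ and ultimately decouples the proximal step (\ref{equ22}) from the operator $F(y)$. I would double check that step and verify that the resulting center of the square is exactly $B_{\mu}(y)=y+\mu F(y)^{\ast}(b-F(y)y)$ as stated, which it is after grouping the linear coefficients.
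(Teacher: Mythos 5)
Your proposal is correct and follows essentially the same route as the paper's proof: expand the three quadratic terms, observe that the $\mu x^{\top}F(y)^{\top}F(y)x$ contributions cancel, complete the square around $B_{\mu}(y)$, and discard the $x$-independent remainder. The constant $R(y)=\mu\|b\|_{2}^{2}-\mu\|F(y)y\|_{2}^{2}+\|y\|_{2}^{2}-\|B_{\mu}(y)\|_{2}^{2}$ you identify matches the paper's expression exactly.
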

\begin{proof}
By the definition, $C_{2}(x,y)$ can be rewritten as
\begin{eqnarray*}
C_{2}(x,y)&=&\|x-(y-\mu F(y)^{\ast}F(y)y+\mu F(y)^{\ast}b)\|_{2}^{2}\\
&&+\lambda\mu P_{a}(x)+\mu\|b\|_{2}^{2}+\|y\|_{2}^{2}-\mu\|F(y)y\|_{2}^{2}\\
&&-\|y-\mu F(y)^{\ast}F(y)y+\mu F(y)^{\ast}b\|_{2}^{2}\\
&=&\|x-B_{\mu}(y)\|_{2}^{2}+\lambda\mu P_{a}(x)+\mu\|b\|_{2}^{2}+\|y\|_{2}^{2}\\
&&-\mu\|F(y)y\|_{2}^{2}-\|B_{\mu}(y)\|_{2}^{2}
\end{eqnarray*}
which implies that $\displaystyle\min_{x\in \Re^{n}}C_{2}(x,y)$ for any fixed positive parameters $\lambda>0$, $\mu>0$ and
$y\in \mathcal{R}^{n}$ is equivalent to
$$\min_{x\in \Re^{n}}\Big\{\|x-B_{\mu}(y)\|_{2}^{2}+\lambda\mu P_{a}(x)\Big\}. $$
\end{proof}

\begin{theorem}\label{the2}
For any fixed positive parameter $\lambda>0$ and $0<\mu<L_{\ast}^{-1}$ with $\|F(x^{\ast})x-F(x^{\ast})x^{\ast}\|_{2}^{2}\leq L_{\ast}\|x-x^{\ast}\|_{2}^{2}$.
If $x^{\ast}$ is the optimal solution of $\displaystyle\min_{x\in \Re^{n}}C_{1}(x)$, then $x^{\ast}$ is also the optimal solution of
$\displaystyle\min_{x\in \Re^{n}}C_{2}(x,x^{\ast})$, that is
$$C_{2}(x^{\ast},x^{\ast})\leq C_{2}(x,x^{\ast})$$
for any $x\in \Re^{n}$.
\end{theorem}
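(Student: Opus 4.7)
The plan is to show the majorization $C_{2}(x,x^{\ast}) \ge \mu C_{1}(x^{\ast}) = C_{2}(x^{\ast},x^{\ast})$ for every $x \in \Re^{n}$ in two stages: first, absorb the curvature correction using the Lipschitz-type hypothesis; second, invoke optimality of $x^{\ast}$ for $C_{1}$ on the remaining ``frozen-Jacobian'' surrogate. Concretely, I would rearrange the definition of $C_{2}$ as
$$C_{2}(x,x^{\ast}) = \mu\|F(x^{\ast})x - b\|_{2}^{2} + \lambda\mu P_{a}(x) + \bigl(\|x - x^{\ast}\|_{2}^{2} - \mu\|F(x^{\ast})x - F(x^{\ast})x^{\ast}\|_{2}^{2}\bigr),$$
and then use $\|F(x^{\ast})x - F(x^{\ast})x^{\ast}\|_{2}^{2} \le L_{\ast}\|x - x^{\ast}\|_{2}^{2}$ together with $\mu L_{\ast}<1$ to drive the final parenthesis down to $(1-\mu L_{\ast})\|x-x^{\ast}\|_{2}^{2} \ge 0$. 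This yields the clean lower bound
$$C_{2}(x,x^{\ast}) \ge \mu\|F(x^{\ast})x - b\|_{2}^{2} + \lambda\mu P_{a}(x),$$
an equality at $x = x^{\ast}$, where the right-hand side collapses to $\mu C_{1}(x^{\ast}) = C_{2}(x^{\ast},x^{\ast})$.

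It then remains to show that $x^{\ast}$ minimizes the frozen surrogate $x \mapsto \|F(x^{\ast})x - b\|_{2}^{2} + \lambda P_{a}(x)$. For this I would lean on Theorem~\ref{the1} applied at $y=x^{\ast}$, which recasts $\min_{x} C_{2}(x,x^{\ast})$ as the proximal problem $\min_{x}\{\|x - B_{\mu}(x^{\ast})\|_{2}^{2} + \lambda\mu P_{a}(x)\}$; by Lemma~\ref{lem1} its minimizer is $G_{\lambda\mu,P}(B_{\mu}(x^{\ast}))$. Matching the ``frozen'' gradient $F(x^{\ast})^{\ast}(F(x^{\ast})x^{\ast}-b)$ at $x^{\ast}$ against the subgradient characterization of the fraction-function proximal map read off from Lemma~\ref{lem1} should give the fixed-point identity $x^{\ast} = G_{\lambda\mu,P}(B_{\mu}(x^{\ast}))$, which is exactly the optimality of $x^{\ast}$ for the surrogate and closes the chain of inequalities.

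The main obstacle I anticipate is precisely this last identification. The gradient of the nonlinear data fidelity $x \mapsto \|F(x)x - b\|_{2}^{2}$ contains, besides the frozen term $2F(x^{\ast})^{\ast}(F(x^{\ast})x^{\ast} - b)$, additional contributions from the Jacobian of $x\mapsto F(x)$ that need not vanish at a minimizer of $C_{1}$. Closing the proof rigorously therefore requires either an implicit smoothness/structural assumption on the quasi-linear map $F$ (so that these extra terms are absent or negligible at $x^{\ast}$), or a more delicate absorption argument that leverages the slack $\mu < L_{\ast}^{-1}$ to swallow the Jacobian remainder. I would flag this point explicitly in the writeup and, if the stronger structural reading is intended, state it precisely at the start of the proof.
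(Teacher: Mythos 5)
Your first stage coincides exactly with the paper's proof: the paper writes $C_{2}(x,x^{\ast})$ in the form you give, drops the term $\|x-x^{\ast}\|_{2}^{2}-\mu\|F(x^{\ast})x-F(x^{\ast})x^{\ast}\|_{2}^{2}\ge(1-\mu L_{\ast})\|x-x^{\ast}\|_{2}^{2}\ge 0$ using the stated hypothesis, and arrives at $C_{2}(x,x^{\ast})\ge\mu\|F(x^{\ast})x-b\|_{2}^{2}+\lambda\mu P_{a}(x)$. Where you and the paper part ways is the second stage. The paper simply asserts, with no further argument, that $\mu\|F(x^{\ast})x-b\|_{2}^{2}+\lambda\mu P_{a}(x)\ge\mu C_{1}(x^{\ast})$ and concludes. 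That assertion is precisely the claim that $x^{\ast}$ minimizes the frozen surrogate $x\mapsto\|F(x^{\ast})x-b\|_{2}^{2}+\lambda P_{a}(x)$, which, as you correctly observe, does not follow from $x^{\ast}$ minimizing $C_{1}(x)=\|F(x)x-b\|_{2}^{2}+\lambda P_{a}(x)$: the two objectives agree at $x=x^{\ast}$ but differ elsewhere, because $C_{1}$ carries the extra dependence of $F$ on its argument. One can imagine a point $\hat x$ with $F(x^{\ast})\hat x$ close to $b$ and $P_{a}(\hat x)$ small, yet $F(\hat x)\hat x$ far from $b$; such a point violates the asserted inequality without contradicting the optimality of $x^{\ast}$ for $C_{1}$. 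So the obstacle you flag is real, and the paper's own proof does not overcome it; it is silent on exactly this step.

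Your proposed repair does not close the gap either, for a reason worth recording. Theorem~\ref{the1} only shows that minimizing $C_{2}(\cdot,x^{\ast})$ is equivalent to the proximal problem $\min_{x}\{\|x-B_{\mu}(x^{\ast})\|_{2}^{2}+\lambda\mu P_{a}(x)\}$, whose minimizer is $G_{\lambda\mu,P}(B_{\mu}(x^{\ast}))$ by Lemma~\ref{lem1}; but the identity $x^{\ast}=G_{\lambda\mu,P}(B_{\mu}(x^{\ast}))$ is exactly the statement that $x^{\ast}$ minimizes $C_{2}(\cdot,x^{\ast})$, i.e.\ the conclusion of Theorem~\ref{the2}. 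In the paper's logical order, equation (\ref{equ23}) is deduced from Theorems~\ref{the1} and~\ref{the2} together, so invoking it here would be circular; and deriving it independently from first-order optimality of $C_{1}$ at $x^{\ast}$ reintroduces the Jacobian of $x\mapsto F(x)$, which need not vanish there. The honest conclusion is that the theorem as stated needs an additional hypothesis (for instance that $x^{\ast}$ also minimizes the frozen surrogate, or a structural condition on $F$ forcing the Jacobian contribution to vanish at $x^{\ast}$), and your writeup is right to flag this rather than reproduce the paper's unjustified inequality.
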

\begin{proof}
By the definition of $C_{2}(x, y)$, we have
\begin{eqnarray*}
C_{2}(x,x^{\ast})&=&\mu\|F(x^{\ast})x-b\|_{2}^{2}+\lambda\mu P_{a}(x)\\
&&-\mu\|F(x^{\ast})x-F(x^{\ast})x^{\ast}\|_{2}^{2}+\|x-x^{\ast}\|_{2}^{2}\\
&\geq&\mu\|F(x^{\ast})x-b\|_{2}^{2}+\lambda\mu P_{a}(x)\\
&\geq&\mu C_{1}(x^{\ast})\\
&=&C_{2}(x^{\ast},x^{\ast}).
\end{eqnarray*}
\end{proof}

Theorem 2 shows that $x^{\ast}$ is the optimal solution of $\displaystyle\min_{x\in \Re^{n}}C_{2}(x,x^{\ast})$ as long as $x^{\ast}$ solves $\displaystyle\min_{x\in \Re^{n}}C_{1}(x)$. Moreover,
combined with Lemma \ref{lem1} and Theorem \ref{the1}, we can immediately conclude that the thresholding representation of $(QP_{a}^{\lambda})$ can be given by
\begin{equation}\label{equ23}
x^{\ast}=G_{a,\lambda\mu}(B_{\mu}(x^{\ast}))
\end{equation}
where the thresholding operator $G_{a,\lambda\mu}$ is obtained in Definition \ref{de1} by replacing $\lambda$ with $\lambda\mu$.

\begin{corollary}\label{co1}
For any fixed $\lambda>0$, $\mu>0$ and vector $x^{\ast}\in \Re^{n}$, let $x^{\ast}=G_{\lambda\mu, P}(B_{\mu}(x^{\ast}))$, then
$$
x^{\ast}_{i}=\left\{
    \begin{array}{ll}
      g_{a,\lambda\mu}(|B_{\mu}(x^{\ast})_{i}|), & \ \ \mathrm{if} \ {|B_{\mu}(x^{\ast})_{i}|> t_{a,\lambda\mu}^{\ast};} \\
      0, & \ \ \mathrm{if} \ {|B_{\mu}(x^{\ast})_{i}|\leq t_{a,\lambda\mu}^{\ast}.}
    \end{array}
  \right.
$$
where the threshold value $t_{a,\lambda\mu}^{\ast}$ is obtained in $(\ref{equ17})$ by replacing $\lambda$ with $\lambda\mu$.
\end{corollary}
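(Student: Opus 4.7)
The claim is essentially a componentwise unpacking of the fixed point equation $x^{\ast}=G_{a,\lambda\mu}(B_{\mu}(x^{\ast}))$ established in equation (\ref{equ23}), combined with the explicit form of the scalar proximal operator given in Lemma \ref{lem1}. My plan is to carry out this unpacking in three clean steps.

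First, I would invoke the thresholding representation $x^{\ast}=G_{a,\lambda\mu}(B_{\mu}(x^{\ast}))$, which follows from combining Theorem \ref{the1} (rewriting $C_{2}(x,y)$ as a proximal problem) with Theorem \ref{the2} (ensuring that a minimizer of $C_{1}$ is also a minimizer of $C_{2}(\cdot,x^{\ast})$), and Lemma \ref{lem1} which provides the explicit minimizer of the scalar surrogate. Second, I would apply Definition \ref{de1}, but with the regularization parameter $\lambda$ replaced by $\lambda\mu$ (this replacement is legitimate because the proximal subproblem in (\ref{equ22}) has effective penalty weight $\lambda\mu$). By Definition \ref{de1}, this operator acts componentwise, so
\begin{equation*}
x^{\ast}_{i} \;=\; \mathrm{prox}_{a,\lambda\mu}^{\beta}\!\bigl(B_{\mu}(x^{\ast})_{i}\bigr)
\end{equation*}
for each $i=1,\dots,n$.

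Third, with $\gamma:=B_{\mu}(x^{\ast})_{i}$ I would substitute directly into formula (\ref{equ15}) of Lemma \ref{lem1}, replacing $\lambda$ by $\lambda\mu$ throughout. This immediately yields the piecewise expression: if $|\gamma|>t_{a,\lambda\mu}^{\ast}$ then $x^{\ast}_{i}=g_{a,\lambda\mu}(\gamma)$, and otherwise $x^{\ast}_{i}=0$. The threshold $t_{a,\lambda\mu}^{\ast}$ is then read off from (\ref{equ17})–(\ref{equ18}) with the same parameter substitution.

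Since each individual step is either a direct appeal to a previously established result or a parameter relabeling, I do not anticipate a genuine obstacle. The only point that deserves care is the bookkeeping around the substitution $\lambda\mapsto\lambda\mu$: I must make sure this substitution is performed consistently in both the threshold $t_{a,\lambda\mu}^{\ast}$ and in the functional form of $g_{a,\lambda\mu}$, and that the role of the sign function inside $g_{a,\lambda}$ of (\ref{equ16}) is preserved under the absolute-value notation used in the corollary's statement. Once this cosmetic matter is sorted out, the result follows.
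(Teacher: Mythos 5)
Your proposal is correct and follows essentially the same route as the paper, which treats the corollary as an immediate componentwise unpacking of the fixed-point relation $x^{\ast}=G_{\lambda\mu,P}(B_{\mu}(x^{\ast}))$ via Definition \ref{de1} and Lemma \ref{lem1} with $\lambda$ replaced by $\lambda\mu$ (note that in the corollary this relation is already taken as a hypothesis, so your first step re-deriving it from Theorems \ref{the1} and \ref{the2} is not strictly needed). Your remark about the sign bookkeeping in $g_{a,\lambda\mu}$ applied to $|B_{\mu}(x^{\ast})_{i}|$ correctly flags a notational looseness that is present in the paper's own statement rather than a gap in your argument.
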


With the thresholding representations (\ref{equ23}), the IFTA for solving the regularization problem $(QP_{a}^{\lambda})$ can be naturally defined as
\begin{equation}\label{equ24}
x^{k+1}=G_{\lambda\mu, P}(B_{\mu}(x^{k}))
\end{equation}
where $B_{\mu}(x^{k})=x^{k}+\mu F(x^{k})^{\ast}(b-F(x^{k})x^{k})$.

It is fairly well known that the quantity of the solution of a regularization problem depends seriously on the setting of the regularization
parameter. Here, the cross-validation method is accepted to select the proper regularization parameter. Nevertheless, when some prior information
is known for a regularization problem, this selection is more reasonable and intelligent. When doing so, the IFTA will be adaptive and free from
the choice of the regularization parameter.

To make this selection clear, we suppose that the vector $x^{\ast}$ of sparsity $r$ is the optimal solution of the regularization problem $(QP_{a}^{\lambda})$,
without loss of generality, we suppose that
\begin{eqnarray*}
&&|B_{\mu}(x^{\ast})|_{1}\geq|B_{\mu}(x^{\ast})|_{2}\geq\cdots\geq|(B_{\mu}(x^{\ast})|_{r}\\
&&\geq|(B_{\mu}(x^{\ast})|_{r+1}\geq\cdots\geq|(B_{\mu}(x^{\ast})|_{n}\geq0.
\end{eqnarray*}
By Corollary 1, the following inequalities hold
$$|B_{\mu}(x^{\ast})|_{i}>t_{a,\lambda\mu}^{\ast}\Leftrightarrow i\in\{1,2,\cdots,r\},$$
$$|B_{\mu}(x^{\ast})|_{i}\leq t_{a,\lambda\mu}^{\ast}\Leftrightarrow i\in\{r+1,r+2,\cdots,n\}.$$
According to $t_{a,\lambda\mu}^{2}\leq t_{a,\lambda\mu}^{1}$, we have
\begin{equation}\label{equ25}
\left\{
  \begin{array}{ll}
   |B_{\mu}(x^{\ast})|_{r}\geq t_{a,\lambda\mu}^{\ast}\geq t_{a,\lambda\mu}^{2}=\sqrt{\lambda\mu}-\frac{1}{2a}; \\
   |B_{\mu}(x^{\ast})|_{r+1}<t_{a,\lambda\mu}^{\ast}\leq t_{a,\lambda\mu}^{1}=\frac{\lambda\mu}{2}a,
  \end{array}
\right.
\end{equation}
which implies
\begin{equation}\label{equ26}
\frac{2|B_{\mu}(x^{\ast})|_{r+1}}{a\mu}\leq\lambda\leq\frac{(2a|B_{\mu}(x^{\ast})|_{r}+1)^{2}}{4a^{2}\mu}.
\end{equation}

Above estimation helps to set the optimal regularization parameter. For convenience, we denote by $\lambda_{1}$ and $\lambda_{2}$ the left and the right of above inequality respectively.
$$
\left\{
  \begin{array}{ll}
   \lambda_{1}=\frac{2|B_{\mu}(x^{\ast})|_{r+1}}{a\mu}; \\
   \lambda_{2}=\frac{(2a|B_{\mu}(x^{\ast})|_{r}+1)^{2}}{4a^{2}\mu},
  \end{array}
\right.
$$
A choice of $\lambda$ is
$$\lambda=\left\{
            \begin{array}{ll}
              \lambda_{1}, & \ \ {\mathrm{if}\ \lambda_{1}\leq\frac{1}{a^{2}\mu};} \\
              \lambda_{2}, &\ \ {\mathrm{if}\ \lambda_{1}>\frac{1}{a^{2}\mu}.}
            \end{array}
          \right.
$$
In practice, we approximate $B_{\mu}(x^{\ast})_{i}$ by $B_{\mu}(x^{k})_{i}$ in (\ref{equ27}), and we can take
\begin{equation}\label{equ27}
\begin{array}{llll}
\lambda=\left\{
            \begin{array}{ll}
              \lambda_{1,k}=\frac{2|B_{\mu}(x^{k})|_{r+1}}{a\mu},  & \ \ {\mathrm{if}\ \lambda_{1,k}\leq\frac{1}{a^{2}\mu};} \\
              \lambda_{2,k}=\frac{(2a|B_{\mu}(x^{k})|_{r}+1)^{2}}{4a^{2}\mu},  & \ \ {\mathrm{if}\ \lambda_{1,k}>\frac{1}{a^{2}\mu}.}
            \end{array}
          \right.
\end{array}
\end{equation}
in applications.

One more thing needs to be mentioned here is that the threshold value
\begin{equation}\label{equ28}
\begin{array}{llll}
t_{a,\lambda\mu}^{\ast}=\left\{
            \begin{array}{ll}
              \frac{\lambda\mu}{2}a,  & \ \ {\mathrm{if}\ \lambda=\lambda_{1,k};} \\
              \sqrt{\lambda\mu}-\frac{1}{2a},  & \ \ {\mathrm{if}\ \lambda=\lambda_{2,k}.}
            \end{array}
          \right.
\end{array}
\end{equation}
Notice that (\ref{equ27}) is valid for any $\mu>0$ satisfying $0<\mu\leq\|F(x_{k})\|_{2}^{-2}$. In general, we can take $\mu=\mu_{k}=\frac{1-\epsilon}{\|F(x_{k})\|_{2}^{2}}$
with any small $\epsilon\in(0,1)$ below.

\begin{algorithm}[h!]
\caption{: IFTA}
\label{alg:A}
\begin{algorithmic}
\STATE {Initialize: Given $x^{0}\in \mathcal{R}^{n}$, $\mu_{0}=\frac{1-\epsilon}{\|F(x^{0})\|_{2}^{2}}$ $(0<\epsilon<1)$ and $a>0$;}
\STATE {\textbf{while} not converged \textbf{do}}
\STATE \ \ \ \ \ {$z^{k}:=B_{\mu_{k}}(x^{k})=x^{k}+\mu_{k} F(x^{k})^{\ast}(y-F(x^{k})x^{k})$;}
\STATE \ \ \ \ \ {$\lambda_{1,k}=\frac{2|B_{\mu_{k}}(x^{k})|_{r+1}}{a\mu_{k}}$, $\lambda_{2,k}=\frac{(2a|B_{\mu_{k}}(x^{k})|_{r}+1)^{2}}{4a^{2}\mu_{k}}$, \\
\ \ \ \ \ $\mu_{k}=\frac{1-\epsilon}{\|F(x^{k})\|_{2}^{2}}$;}
\STATE \ \ \ {if\ $\lambda_{1,k}\leq\frac{1}{a^{2}\mu_{k}}$\ then}
\STATE \ \ \ \ \ \ {$\lambda=\lambda_{1,k}$; $t^{\ast}=\frac{\lambda\mu_{k} a}{2}$}
\STATE \ \ \ \ \ \ {for\ $i=1:\mathrm{length}(x)$}
\STATE \ \ \ \ \ \ {1.\ $|z^{k}_{i}|>t_{a,\lambda\mu_{k}}^{\ast}$, then $x^{k+1}_{i}=g_{a,\lambda_{k}\mu_{k}}(z^{k}_{i})$;}
\STATE \ \ \ \ \ \ {2.\ $|z^{k}_{i}|\leq t_{a,\lambda\mu_{k}}^{\ast}$, then $x^{k+1}_{i}=0$;}
\STATE \ \ \ {else}
\STATE \ \ \ \ \ \ {$\lambda=\lambda_{2,k}$; $t^{\ast}=\sqrt{\lambda\mu_{k}}-\frac{1}{2a}$;}
\STATE \ \ \ \ \ \ {for\ $i=1:\mathrm{length}(x)$}
\STATE \ \ \ \ \ \ {1.\ $|z^{k}_{i}|>t_{a,\lambda\mu_{k}}^{\ast}$, then $x^{k+1}_{i}=g_{a,\lambda_{k}\mu_{k}}(z^{k}_{i})$;}
\STATE \ \ \ \ \ \ {2.\ $|z^{k}_{i}|\leq t_{a,\lambda\mu_{k}}^{\ast}$, then $x^{k+1}_{i}=0$;}
\STATE \ \ \ {end}
\STATE \ \ \ \ \ \ \ {$k\rightarrow k+1$;}
\STATE{\textbf{end while}}
\STATE{\textbf{return}: $x^{k+1}$.}
\end{algorithmic}
\end{algorithm}

\section{Numerical experiments}\label{section3}
In the section, we carry out a series of simulations to demonstrate the performance of IFTA, and compare them with those obtained with some state-of-art methods (iterative soft thresholding algorithm (ISTA)[6,17],
iterative hard thresholding algorithm (IHTA)[6,17]). For each experiment, we repeatedly perform 30 tests and present average results and take $a=1$.

In our numerical experiments, we set
\begin{equation}\label{equ29}
F(x)=A_{1}+\eta f(\|x-x_{0}\|_{2}) A_{2}
\end{equation}
where $A_{1}\in\Re^{30\times 100}$ is a fixed Gaussian random matrix, $x_{0}\in \Re^{100}$ is a reference vector, $f:[0,\infty)\rightarrow \mathcal{R}$ is a positive and smooth Lipschitz continuous function
with $f(t)=\ln(t+1)$, $\eta$ is a sufficiently small scaling factor (we set $\eta=0.003$), and $A_{2}\in \Re^{30\times 100}$ is a fixed matrix with every entry equals to 1. Then the form of
nonlinearity considered in (\ref{equ29}) is a quasi-linear, and the more detailed accounts of the setting in form (\ref{equ29}) can be seen in [6,17]. By randomly generating such sufficiently sparse vectors
$x_{0}$ (choosing the non-zero locations uniformly over the support in random, and their values from $N(0,1))$, we generate vectors $b$. By this way, we know the sparsest solution to $F(x_{0})x_{0} = b$, and
we are able to compare this with algorithmic results.

The stopping criterion is usually as following
$$\frac{\|x_{k}-x_{k-1}\|_{2}}{\|x_{k}\|_{2}}\leq \mathrm{Tol}$$
where $x_{k}$ and $x_{k-1}$ are numerical results from two continuous iterative steps and $\mathrm{Tol}$ is a given small number. The success is measured by the computing
$$\mathrm{relative\ error}=\frac{\|x^{\ast}-x_{0}\|_{2}}{\|x_{0}\|_{2}}\leq \mathrm{Re}$$
where $x^{\ast}$ is the numerical results generated by IFTA, and $\mathrm{Re}$ is also a given small number. In all of our experiments, we set $\mathrm{Tol}=10^{-8}$ to indicate the stopping criterion, and set $\mathrm{Re}=10^{-4}$ to indicate a perfect recovery of the original sparse vector $x_{0}$.

\begin{figure}[h!]
 \centering
 \includegraphics[width=2.7in]{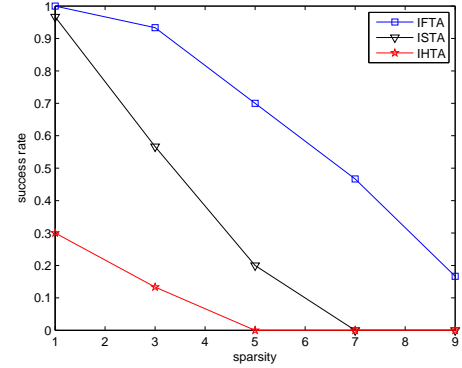}
\caption{The success rate of three algorithms in the recovery of a sparse signal with different cardinality with $a=1$.}
\label{fig3}       
\end{figure}

\begin{figure}[h!]
 \centering
 \includegraphics[width=2.7in]{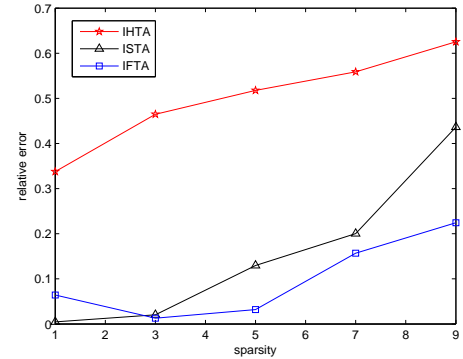}
\caption{The relative error between the solution $x^{\ast}$ and the given signal $x_{0}$ with $a=1$.}
\label{fig4}       
\end{figure}

The graphs presented in Fig.\ref{fig3} and Fig.\ref{fig4} show the performance of the ISTA, IHTA and IFTA in recovering the true (sparsest) signals. From Fig.3, we can see that IFTA performs best,
and IST algorithm the second. From Fig.\ref{fig4}, we can get that the IFTA always has the smallest relative error value with sparsity growing.

\section{Conclusion}\label{section4}
In the paper, we take the fraction function as the substitution for $\ell_{0}$-norm in quasi-linear compressed sensing. An iterative fraction thresholding algorithm is proposed to
solve the regularization problem $(QP_{a}^{\lambda})$ for all $a>0$. With the change of parameter $a>0$, our algorithm could get a promising result, which is one of the advantages for our algorithm 
compared with other algorithms. We also provide a series of experiments to assess performance of our algorithm, and the experiment results show that, compared with some state-of-art algorithms, our 
algorithm performs the best in the sparse signal recovery. However, the convergence of our algorithm is not proved theoretically in this paper, and it is our future work.


\ifCLASSOPTIONcaptionsoff
  \newpage
\fi

\end{document}